\newtheorem{proposition}{Proposition}
\newtheorem{lemma}{Lemma}
\begin{document}
\title{Advertising in an oligopoly with differentiated goods under general demand and cost functions: A differential game approach}

\author{Masahiko Hattori\thanks{%
mhattori@mail.doshisha.ac.jp} \\
%EndAName
Faculty of Economics, Hokkai-Gakuen University,\\
[.02cm] Toyohira-ku, Sapporo, Hokkaido, 062-8605, Japan,\\
[.01cm] \textrm{and} \\
[.1cm] Yasuhito Tanaka\thanks{%
yatanaka@mail.doshisha.ac.jp}\\
[.01cm] Faculty of Economics, Doshisha University,\\
Kamigyo-ku, Kyoto, 602-8580, Japan.\\
}

\date{}
\maketitle

\begin{abstract}
We present an analysis of advertising activities in a dynamic oligopoly with differentiated goods by differential game approach under general demand and cost functions. Mainly we show the following results. The comparison of the open-loop solution and that of the closed-loop solution depends on whether the outputs of the firms are strategic substitutes or strategic complements, and the memoryless closed-loop solution and the feedback solution are equivalent when there is no spillover effect of advertising activities.
\end{abstract}

\begin{description}
\item[Keywords:] oligopoly; advertising; differential game; general demand and cost functions
\end{description}

\begin{description}
\item[JEL Classification No.:] C73, D43, L13.
\end{description}

\section{Introduction}

In this paper we present an analysis of advertising activities in a dynamic oligopoly model with differentiated goods by differential game approach. There are many studies of dynamic oligopoly by differential game theory, for example, \cite{cl0}, \cite{cl6}, \cite{cl1}, \cite{cl3}, \cite{cl2}, \cite{cl4}, \cite{fu}, \cite{fu1}, and \cite{lam18}. Among them \cite{cl6} analyzed the advertising activities in a Cournot oligopoly with differentiated goods. However, most of these studies including \cite{cl6} used a model of linear demand functions and quadratic or linear cost functions. These assumptions are very limited. We study the problem addressed by them in an oligopoly under general demand and cost functions.

In the next section we present a model and assumptions. In Section 3 we consider the open-loop solution of advertising activities. In Section 4  we present the memoryless closed-loop solution of advertising activities. \cite{cl6} claim that the open-loop solution and the memoryless closed-loop solution coincide.  \cite{cl6} said
\begin{quote}
the optimal values of the control variables of each player are not affected by state variables different from its own market size.
\end{quote}
However, it is incorrect. The market size (or the accumulated advertising effect) of one good affects its price, then the demands of other goods are affected. We show that the comparison of the open-loop solution and the closed-loop solution depends on whether the outputs of the firms are strategic substitutes or strategic complements. If the outputs of the firms are strategic substitutes, the steady state value of the accumulated effects of advertising in the closed-loop solution is smaller than that in the open-loop solution. If the outputs of the firms are strategic complements, the steady state value of the accumulated effects of advertising in the closed-loop solution is larger than that in the open-loop solution. In Section 5 we analyze the feedback solution using Hamilton-Jacobi-Bellman equation, and show the equivalence of the memoryless closed-loop solution and the feedback solution when there is no spillover effect of advertising activities. In Section 6 we consider advertising activities in a cartel. We show that the adversing investment in the cartel may be larger than that in the open-loop solution. This is due to the spillover of advertising activities.

\section{The model}

Consider an oligopoly with $n$ firms in which at any $t\in [0, \infty)$  they produce differentiated goods to maximize their discounted profits. The firms are called Firms 1, 2, $\dots$, $n$. Let $q_i(t)$ be the output of Firm $i,\ i\in \{1, 2, \dots, n\}$, $p_i(t)$ be the price at $t$. The inverse demand function is written as
\[p_i(t)=p_i(A_i(t),q_1(t),q_2(t),\dots,q_n(t)),\ i\in \{1, 2, \dots, n\}.\]
$A_i(t)$ represents the market size of the good of Firm $i$, or the accumulated effects of advertising for the good of Firm $i$. It is a state variable. Denote $p_i(A_i(t),q_1(t),q_2(t),\dots,q_n(t))$ by $p_i$. The inverse demand functions for all firms are symmetric. We assume 
\begin{equation*}
\frac{\partial p_i(A_i(t),q_1(t),q_2(t),\dots,q_n(t))}{\partial q_i(t)}<0,\ \frac{\partial p_i(A_i(t),q_1(t),q_2(t),\dots,q_n(t))}{\partial q_j(t)}<0,\ j\neq i,
\end{equation*}
\[\frac{\partial p_i(A_i(t),q_1(t),q_2(t),\dots,q_n(t))}{\partial A_i(t)}>0,\]
and

\begin{equation}
\frac{\partial p_i}{\partial A_i(t)}+\frac{\partial^2 p_i}{\partial A_i(t)\partial q_i(t)}q_i(t)>0.\label{aq}
\end{equation}
If the outputs of the firms are strategic substitutes
\[\frac{\partial p_j}{\partial q_i(t)}+\frac{\partial^2 p_j}{\partial q_i(t)\partial q_j(t)}q_j(t)<0,\ j\neq i,\]
and if they are strategic complements
\[\frac{\partial p_j}{\partial q_i(t)}+\frac{\partial^2 p_j}{\partial q_i(t)\partial q_j(t)}q_j(t)>0,\ j\neq i.\]
The production cost of Firm $i$ is
\[c(q_i(t)),\ i\in \{1, 2, \dots, n\},\ c(q_i(t))>0.\]
 This is common to all firms. $c(q_i(t))$ is strictly increasing and convex, that is, $c'_i(q_i(t))>0$ and $c''_i(q_i(t))\geq 0$.

Let $k_i(t)$ be the advertising investment by Firm $i$. The moving of $A_i(t)$ is governed by 
\begin{equation}
\frac{dA_i(t)}{dt}=\Gamma(k_i(t),K_{-i}(t))-\delta A_i(t),\ i\in \{1, 2, \dots, n\},\ \Gamma(k_i(t),K_{-i}(t))>0,\label{m1}
\end{equation}
where
\[K_{-i}(t)=\sum_{j\neq i}k_j(t).\]
$\delta>0$ is the constant depreciation rate. We assume that $\Gamma(k_i(t),K_{-i}(t))$ is strictly increasing and concave, that is,
\begin{equation}
\left\{
\begin{split}
&\frac{\partial \Gamma(k_i(t),K_{-i}(t))}{\partial k_i(t)}>0,\ \frac{\partial \Gamma(k_i(t),K_{-i}(t))}{\partial K_{-i}(t)}>0,\ \frac{\partial^2 \Gamma(k_i(t),K_{-i}(t))}{\partial k_i(t)^2}\leq 0,\\
&\frac{\partial^2 \Gamma(k_i(t),K_{-i}(t))}{\partial k_i(t)^2}+(n-1)\frac{\partial^2 \Gamma(k_i(t),K_{-i}(t))}{\partial k_i(t)\partial K_{-i}(t)}\leq 0,\label{as0}\\
%&\mathrm{and}\\
&\left|\frac{\partial \Gamma(k_i(t),K_{-i}(t))}{\partial k_i(t)}\right|>\left|\frac{\partial \Gamma(k_i(t),K_{-i}(t))}{\partial k_i(t)}\right|.
\end{split}
\right.
\end{equation}
The last condition means that the direct effect of advertising is larger than the spillover effect. $\Gamma$ is common to all firms. Denote $\Gamma(k_i(t),K_{-i}(t))$ by $\Gamma_i$.

The advertising investment cost of Firm $i$ is
\[\gamma(k_i(t)),\ i\in \{1, 2, \dots, n\},\ \gamma(k_i(t))>0.\]
We assume that it is strictly increasing and strictly convex, that is
\begin{equation}
\gamma'(k_i(t))>0,\ \gamma''(k_i(t))>0.\label{as2}
\end{equation}
$\gamma$ is common to all firms.

The instantaneous profit of Firm $i$ is written as
\[p_i(A_i(t),q_1(t), q_2), \dots, q_n(t))q_i(t)-c(q_i(t))-\gamma(k_i(t)).\]
The objective of Firm $i$ is 
\[\max_{q_i(t), k_i(t)}\int_{0}^{\infty}e^{-\rho t}[p_i(A_i(t),q_1(t), q_2), \dots, q_n(t))q_i(t)-c(q_i(t))-\gamma(k_i(t))]dt,\]
subject to (\ref{m1}). $\rho>0$ is the discount rate.

The present value Hamiltonian function for Firm $i,\ i\in \{1, 2, \dots, n\}$, is
\begin{align*}
\mathcal{H}_i(q_i(t), k_i(t))=&e^{-\rho t}\big\{p_i(A_i(t),q_1(t), q_2), \dots, q_n(t))q_i(t)-c(q_i(t))-\gamma(k_i(t))\\
&+\lambda_{ii}(t)(\Gamma_i(k_i(t), K_{-i}(t))-\delta A_i(t))+\sum_{j\neq i}\lambda_{ij}(t)(\Gamma_j(k_j(t), K_{-j}(t))-\delta A_j(t))\}.
\end{align*}
The current value Hamiltonian function for Firm $i,\ i\in \{1, 2, \dots, n\}$, is
\begin{align*}
\hat{\mathcal{H}}_i(q_i(t), k_i(t))=&e^{\rho t}\mathcal{H}_i=p_i(A_i(t),q_1(t), q_2), \dots, q_n(t))q_i(t)-c(q_i(t))-\gamma(k_i(t))\\
&+\lambda_{ii}(t)(\Gamma_i(k_i(t), K_{-i}(t))-\delta A_i(t))+\sum_{j\neq i}\lambda_{ij}(t)(\Gamma_j(k_j(t), K_{-j}(t))-\delta A_j(t)).
\end{align*}
Let
\[\mu_{ii}(t)=e^{-\rho t}\lambda_{ii}(t),\ \mu_{ij}(t)=e^{-\rho t}\lambda_{ij}(t).\]
$\mu_{ii}(t)$ and $\mu_{ij}(t)$ are the costate variables. Denote $\hat{\mathcal{H}}_i(q_i(t), k_i(t))$ by $\hat{\mathcal{H}}_i$. 

\section{Advertising in dynamic oligopoly: Open-loop solution}

We seek to the solution of the open-loop approach. 

The first order conditions for Firm $i$ are
\begin{equation}
\frac{\partial \hat{\mathcal{H}}_i}{\partial q_i(t)}=p_i+\frac{\partial p_i}{\partial q_i(t)}q_i(t)-c'(q_i(t))=0,\label{q}
\end{equation}
and
\begin{equation}
\frac{\partial \hat{\mathcal{H}}_i}{\partial k_i(t)}=-\gamma'(k_i(t))+\lambda_{ii}(t)\frac{\partial \Gamma_i}{\partial k_i(t)}+\sum_{j\neq i}\lambda_{ij}(t)\frac{\partial \Gamma_j}{\partial K_{-j}(t)}=0.\label{k}
\end{equation}
The second order condition about production is
\[\frac{\partial \hat{\mathcal{H}}^2_i}{\partial q_i(t)^2}=2\frac{\partial p_i}{\partial q_i(t)}+\frac{\partial^2 p_i}{\partial q_i(t)^2}q_i(t)-c''(q_i(t))<0.\]
The second order condition about advertising investment is
\[\frac{\partial^2 \hat{\mathcal{H}}_i}{\partial k_i(t)^2}=-\gamma''(k_i(t))+\lambda_{ii}(t)\frac{\partial^2 \Gamma_i}{\partial k_i(t)^2}+\sum_{j\neq i}\lambda_{ij}(t)\frac{\partial^2 \Gamma_j}{\partial K_{-j}(t)^2}<0,\ j\neq i.\]
The adjoint conditions are
\begin{equation}
-\frac{\partial \hat{\mathcal{H}}_i}{\partial A_i(t)}=\frac{\partial \lambda_{ii}(t)}{\partial t}-\rho \lambda_{ii}(t),\ i\in \{1, 2, \dots,n\},\label{a1}
\end{equation}
and
\begin{equation}
-\frac{\partial \hat{\mathcal{H}}_i}{\partial A_j(t)}=\frac{\partial \lambda_{ij}(t)}{\partial t}-\rho \lambda_{ij}(t),\ j\neq i.\label{a1a}
\end{equation}
We have
\begin{equation}
\frac{\partial \hat{\mathcal{H}}_i}{\partial A_i(t)}=\frac{\partial p_i}{\partial A_i(t)}q_i(t)-\delta\lambda_{ii}(t),\label{b1}
\end{equation}
and
\begin{equation}
\frac{\partial \hat{\mathcal{H}}_i}{\partial A_j(t)}=-\delta\lambda_{ij}(t),\ j\neq i.\label{b2}
\end{equation}
At the steady state
\begin{equation}
\frac{dA_i(t)}{dt}=\Gamma_i-\delta A_i(t)=0,\label{st1}
\end{equation}
\[\frac{\partial \lambda_{ii}(t)}{\partial t}=0,\ \frac{\partial \lambda_{ij}(t)}{\partial t}=0,\ i\in \{1, 2, \dots, n\},\ j\neq i.\]
By symmetry of the oligopoly we can assume $\lambda_{ii}(t)=\lambda_{jj}(t)$, $\lambda_{ij}(t)=\lambda_{jl}(t)=\lambda_{ji}(t)$,\ $j\neq i,\ l\neq i, j$, $k_i(t)=k_j(t)$, $q_i(t)=q_j(t)$, $A_i(t)=A_j(t)$ for $j\neq i$, and so on. Denote the steady state values of $q_i(t)$, $k_i(t)$, $A_i(t)$, $\lambda_{ii}(t)$ and $\lambda_{ij}(t)$ by $q^*$, $k^*$,$A^*$, $\lambda_{own}$ and $\lambda_{other}$. 

From (\ref{a1a}) and (\ref{b2}) we have
\[\lambda_{other}=0.\]
From (\ref{a1}) and (\ref{b1})
\[(\rho+\delta)\lambda_{own}=\frac{\partial p_i}{\partial A_i(t)}q^*.\]
(\ref{k}) is reduced to
\[-\gamma'(k^*)+\lambda_{own}\frac{\partial \Gamma_i}{\partial k_i(t)}=0.\]
Therefore, we get
\begin{equation}
\frac{\partial p_i}{\partial A_i(t)}q^*-(\rho+\delta)\frac{\gamma'(k^*)}{\frac{\partial \Gamma_i}{\partial k_i(t)}}=0.\label{ga1}
\end{equation}
From (\ref{m1}) and (\ref{a1}), under the symmetry condition, 
\begin{equation}
\frac{dA_i(t)}{dt}=\Gamma(k_i(t),(n-1)k_i(t))-\delta A_i(t),\label{d1}
\end{equation}
and
\begin{equation}
\frac{\partial \lambda_{ii}(t)}{\partial t}=-\frac{\partial p_i}{\partial A_i(t)}q_i(t)+(\rho+\delta) \lambda_{ii}(t).\label{d2}
\end{equation}
From (\ref{k}) with $\lambda_{ij}(t)=0$ \textcolor{black}{and $\lambda_{ii}(t)=\frac{\gamma'(k_i(t))}{\frac{\partial \Gamma_i}{\partial k_i(t)}}$}, under the symmetry condition,
\[\frac{\partial k_i(t)}{\partial \lambda_{ii}(t)}=\frac{\left(\frac{\partial \Gamma_i}{\partial k_i(t)}\right)^2}{\gamma''\frac{\partial \Gamma_i}{\partial k_i(t)}-\gamma'\left[\frac{\partial^2 \Gamma_i}{\partial k_i(t)^2}+(n-1)\frac{\partial^2 \Gamma_i}{\partial k_i(t)\partial K_{-i}(t)}\right]}.\]
Thus, we have
\[\frac{\partial \Gamma(k_i(t),(n-1)k_i(t))}{\partial \lambda_{ii}(t)}=\left(\frac{\partial \Gamma_i}{\partial k_i(t)}+(n-1)\frac{\partial \Gamma_i}{\partial K_{-i}(t)}\right)\frac{\left(\frac{\partial \Gamma_i}{\partial k_i(t)}\right)^2}{\gamma''\frac{\partial \Gamma_i}{\partial k_i(t)}-\gamma'\left[\frac{\partial^2 \Gamma_i}{\partial k_i(t)^2}+(n-1)\frac{\partial^2 \Gamma_i}{\partial k_i(t)\partial K_{-i}(t)}\right]}.\]

Consider a system of dynamic equations (\ref{d1}) and (\ref{d2}). Linearization of them around the steady state is as follows.
\[
\begin{bmatrix}
\frac{dA_i(t)}{dt}\\[.15cm]
\frac{\partial \lambda_{ii}(t)}{\partial t}
\end{bmatrix}
=\Omega
\begin{bmatrix}
A_i(t)-A^*\\
\lambda_{ii}(t)-\lambda^*
\end{bmatrix},
\]
\[
\Omega=
\begin{bmatrix}
-\delta&\left(\frac{\partial \Gamma_i}{\partial k_i(t)}+(n-1)\frac{\partial \Gamma_i}{\partial K_{-i}(t)}\right)\frac{\left(\frac{\partial \Gamma_i}{\partial k_i(t)}\right)^2}{\gamma''\frac{\partial \Gamma_i}{\partial k_i(t)}-\gamma'\left[\frac{\partial^2 \Gamma_i}{\partial k_i(t)^2}+(n-1)\frac{\partial^2 \Gamma_i}{\partial k_i(t)\partial K_{-i}(t)}\right]}\\
-\frac{\partial }{\partial A_i(t)}\left(\frac{\partial p_i}{\partial A_i(t)}q_i(t)\right)&\rho+\delta
\end{bmatrix}.
\]
The Jacobian matrix $\Omega$ has the following trace and determinant
\[\mathrm{tr}\ (\Omega)=\rho>0,\]
\begin{align*}
&\det{\Omega}\\
=&\left(\frac{\partial \Gamma_i}{\partial k_i(t)}+(n-1)\frac{\partial \Gamma_i}{\partial K_{-i}(t)}\right)\frac{\left(\frac{\partial \Gamma_i}{\partial k_i(t)}\right)^2}{\gamma''\frac{\partial \Gamma_i}{\partial k_i(t)}-\gamma'\left[\frac{\partial^2 \Gamma_i}{\partial k_i(t)^2}+(n-1)\frac{\partial^2 \Gamma_i}{\partial k_i(t)\partial K_{-i}(t)}\right]}\frac{\partial }{\partial A_i(t)}\left(\frac{\partial p_i}{\partial A_i(t)}q_i(t)\right)\\
&-\delta(\rho+\delta).
\end{align*}
For the steady state to be a saddle point we need
\[\det{\Omega}<0.\]
If $\det{\Omega}>0$, the steady state is unstable. From $\frac{dA_i(t)}{dt}=0$, we have
\[\Gamma(k^*,(n-1)k^*)=\delta A^*.\]
Thus,
\[\frac{dk^*}{dA^*}=\frac{\delta}{\frac{\partial \Gamma_i}{\partial k_i(t)}+(n-1)\frac{\partial \Gamma_i}{\partial K_{-i}(t)}}.\]
 Let $\Phi$ be the left-hand side of (\ref{ga1}). Differentiating $\Phi$ with respect to $A_i(t)$ at the steady state yields 
\begin{align*}
\frac{\partial \Phi}{\partial A_i(t)}=&\frac{\partial}{\partial A_i(t)}\left(\frac{\partial p_i}{\partial A_i(t)}q^*\right)-(\rho+\delta)\frac{\partial}{\partial k_i(t)}\left(\frac{\gamma'(k^*)}{\frac{\partial \Gamma_i}{\partial k_i(t)}}\right)\frac{\delta}{\frac{\partial \Gamma_i}{\partial k_i(t)}+(n-1)\frac{\partial \Gamma_i}{\partial K_{-i}(t)}}\\
=&\frac{\partial}{\partial A_i(t)}\left(\frac{\partial p_i}{\partial A_i(t)}q^*\right)-\delta(\rho+\delta)\frac{\gamma''\frac{\partial \Gamma_i}{\partial k_i(t)}-\gamma'\left[\frac{\partial^2 \Gamma_i}{\partial k_i(t)^2}+(n-1)\frac{\partial^2 \Gamma_i}{\partial k_i(t)\partial K_{-i}(t)}\right]}{\left(\frac{\partial \Gamma_i}{\partial k_i(t)}\right)^2\left[\frac{\partial \Gamma_i}{\partial k_i(t)}+(n-1)\frac{\partial \Gamma_i}{\partial K_{-i}(t)}\right]}.
\end{align*}
From the assumptions (\ref{as0}) and (\ref{as2})
\[\gamma''\frac{\partial \Gamma_i}{\partial k_i(t)}-\gamma'\left[\frac{\partial^2 \Gamma_i}{\partial k_i(t)^2}+(n-1)\frac{\partial^2 \Gamma_i}{\partial k_i(t)\partial K_{-i}(t)}\right]>0.\]
Therefore, $\det{\Omega}<0$ means $\frac{\partial \Phi}{\partial A_i(t)}<0$, and we have shown the following result.
\begin{lemma}
The left-hand side of (\ref{ga1}), $\Phi$, is decreasing with respect to $A_i(t)$.
\end{lemma}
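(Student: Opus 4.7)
The plan is to observe that the lemma is essentially a restatement of the inequality derived in the lines immediately preceding its statement, and to reorganize the computation so that the sign of $\partial \Phi/\partial A_i(t)$ is read off directly from the saddle-point condition $\det{\Omega}<0$.

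First, I would introduce shorthand to make the structure transparent. Write
\[
S=\frac{\partial \Gamma_i}{\partial k_i(t)}+(n-1)\frac{\partial \Gamma_i}{\partial K_{-i}(t)},\qquad D=\gamma''\frac{\partial \Gamma_i}{\partial k_i(t)}-\gamma'\left[\frac{\partial^2 \Gamma_i}{\partial k_i(t)^2}+(n-1)\frac{\partial^2 \Gamma_i}{\partial k_i(t)\partial K_{-i}(t)}\right],
\]
\[
M=\frac{(\partial \Gamma_i/\partial k_i(t))^2}{D},\qquad T=\frac{\partial}{\partial A_i(t)}\left(\frac{\partial p_i}{\partial A_i(t)}q^*\right).
\]
With this notation, the determinant computed above is $\det{\Omega}=S\,M\,T-\delta(\rho+\delta)$, while the expression obtained for $\partial \Phi/\partial A_i(t)$ factors as $T-\delta(\rho+\delta)/(S\,M)$. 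Hence
\[
\frac{\partial \Phi}{\partial A_i(t)}=\frac{\det{\Omega}}{S\,M},
\]
so the claim reduces to showing that the denominator $S\,M$ is positive.

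Next I would verify $S\,M>0$ using only the assumptions on $\Gamma$ and $\gamma$. The monotonicity assumptions in (\ref{as0}) give $\partial \Gamma_i/\partial k_i(t)>0$ and $\partial \Gamma_i/\partial K_{-i}(t)>0$, so $S>0$ and $(\partial \Gamma_i/\partial k_i(t))^2>0$. For the denominator $D$, the strict convexity (\ref{as2}) gives $\gamma''>0$, while the concavity assumption in (\ref{as0}) gives $\partial^2 \Gamma_i/\partial k_i(t)^2+(n-1)\partial^2 \Gamma_i/\partial k_i(t)\partial K_{-i}(t)\leq 0$, and combined with $\gamma'>0$ this forces $D>0$. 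Therefore $M>0$ and $S\,M>0$; this is exactly the inequality displayed just before the statement of the lemma.

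Finally, invoking the saddle-point hypothesis $\det{\Omega}<0$ (which is precisely the condition imposed in the preceding paragraph for the steady state to be economically meaningful) and dividing by the positive quantity $S\,M$ gives $\partial \Phi/\partial A_i(t)<0$, proving the lemma. There is no real obstacle here; the only care required is bookkeeping to confirm that the two expressions $\det{\Omega}$ and $\partial \Phi/\partial A_i(t)$ really differ by the positive factor $S\,M$, which is immediate once the common sub-expressions are collected.
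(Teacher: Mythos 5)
Your proposal is correct and follows essentially the same route as the paper: the paper also derives $\partial \Phi/\partial A_i(t)$ at the steady state, notes that $\gamma''\frac{\partial \Gamma_i}{\partial k_i(t)}-\gamma'\left[\frac{\partial^2 \Gamma_i}{\partial k_i(t)^2}+(n-1)\frac{\partial^2 \Gamma_i}{\partial k_i(t)\partial K_{-i}(t)}\right]>0$ from (\ref{as0}) and (\ref{as2}), and concludes that $\det{\Omega}<0$ implies $\partial \Phi/\partial A_i(t)<0$. Your only addition is making the factorization $\partial \Phi/\partial A_i(t)=\det{\Omega}/(S\,M)$ explicit, which is a clean bookkeeping of the same argument.
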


\subsection*{\textbf{Linear and quadratic example}}

Assume that the inverse demand function is
\[p_i(t)=A_i(t)-Bq_i(t)-D\sum_{j\neq i}q_j(t).\]
The advertising cost is
\[\gamma(k_i(t))=\frac{\alpha}{2}(k_i(t))^2.\]
The production cost is
\[c(q_i(t))=cq_i(t),\]
and the accumulation of advertising effects is written as 
\[\Gamma(k_i(t),K_{-i}(t))=k_i(t)+\beta\sum_{j\neq i}k_j(t),\ 0<\beta <1.\]
At the steady state, (\ref{q}) is reduced to
\[A^*-[B+D(n-1)]q^*-Bq^*-c=0.\]
Thus,
\[q^*=\frac{A^*-c}{2B+D(n-1)}.\]
(\ref{ga1}) is reduced to
\[q^*=(\rho+\delta)\alpha k^*.\]
Thus,
\begin{equation}
\frac{A^*-c}{2B+D(n-1)}=(\rho+\delta)\alpha k^*.\label{op1}
\end{equation}
From (\ref{st1})
\begin{equation}
A^*=\frac{\Gamma_i}{\delta}=\frac{[1+(n-1)\beta]k^*}{\delta}.\label{op2}
\end{equation}
Solving (\ref{op1}) and (\ref{op2}) yields
\[A^*=\frac{[1+(n-1)\beta]c}{1+(n-1)\beta-\alpha \delta(\rho+\delta)[2B+(n-1)D]},\]
and
\[k^*=\frac{c\delta}{1+(n-1)\beta-\alpha \delta(\rho+\delta)[2B+(n-1)D]}.\]
These are the results in \cite{cl6}.

%k=-(c\delta)/(\alphadn\delta^2-\alphad\delta^2+2\alphab\delta^2+\alphadnr\delta-\alphadr\delta+2\alphabr\delta-\gamma n+\gamma -1)

\section{Advertising in a dynamic oligopoly: Memoryless closed-loop solution without spillover}

We seek to the solution of the memoryless closed-loop approach. As discussed in Introduction memoryless closed-loop and open-loop are not equivalent. For simplicity, we assume 
\begin{equation}
\frac{\partial \Gamma_i}{\partial K_{-i}(t)}=0,\label{as1}
\end{equation}
that is, there is no spillover effect of advertising investment. The first order conditions and the second order conditions for Firm $i$ are the same as those in the open-loop case. Using (\ref{as1}), they are
\begin{equation}
\frac{\partial \hat{\mathcal{H}}_i}{\partial q_i(t)}=p_i+\frac{\partial p_i}{\partial q_i(t)}q_i(t)-c'(q_i(t))=0,\label{cq1}
\end{equation}
\begin{equation}
\frac{\partial \hat{\mathcal{H}}_i}{\partial k_i(t)}=-\gamma'(k_i(t))+\lambda_{ii}(t)\frac{\partial \Gamma_i}{\partial k_i(t)}=0,\label{ck1}
\end{equation}
\[\frac{\partial \hat{\mathcal{H}}^2_i}{\partial q_i(t)^2}=2\frac{\partial p_i}{\partial q_i(t)}+\frac{\partial^2 p_i}{\partial q_i(t)^2}q_i(t)-c''(q_i(t))<0,\]
and
\[\frac{\partial^2 \hat{\mathcal{H}}_i}{\partial k_i(t)^2}=-\gamma''(k_i(t))+\lambda_{ii}(t)\frac{\partial^2 \Gamma_i}{\partial k_i(t)^2}<0,\ j\neq i.\]

The adjoint conditions are different from those in the open-loop case. They are
\begin{align}
&-\frac{\partial \hat{\mathcal{H}}_i}{\partial A_i(t)}-\sum_{j\neq i}\frac{\partial \hat{\mathcal{H}}_i}{\partial k_j(t)}\frac{\partial k_j(t)}{\partial A_i(t)}-\sum_{j\neq i}\frac{\partial \hat{\mathcal{H}}_i}{\partial q_j(t)}\frac{\partial q_j(t)}{\partial A_i(t)}\label{a11}\\
=&\frac{\partial \lambda_{ii}(t)}{\partial t}-\rho \lambda_{ii}(t),\ i\in \{1, 2, \dots,n\}, \notag
\end{align}
and
\begin{align}
&-\frac{\partial \hat{\mathcal{H}}_i}{\partial A_j(t)}-\sum_{l\neq i,j}\frac{\partial \hat{\mathcal{H}}_i}{\partial k_l(t)}\frac{\partial k_l(t)}{\partial A_j(t)}-\frac{\partial \hat{\mathcal{H}}_i}{\partial k_j(t)}\frac{\partial k_j(t)}{\partial A_j(t)}\label{a12}\\
&-\sum_{l\neq i,j}\frac{\partial \hat{\mathcal{H}}_i}{\partial q_l(t)}\frac{\partial q_l(t)}{\partial A_j(t)}-\frac{\partial \hat{\mathcal{H}}_i}{\partial q_j(t)}\frac{\partial q_j(t)}{\partial A_j(t)}=\frac{\partial \lambda_{ij}(t)}{\partial t}-\rho \lambda_{ij}(t),\ j\neq i.\notag
\end{align}
The terms in (\ref{a11})
\[-\sum_{j\neq i}\frac{\partial \hat{\mathcal{H}}_i}{\partial k_j(t)}\frac{\partial k_j(t)}{\partial A_i(t)}-\sum_{j\neq i}\frac{\partial \hat{\mathcal{H}}_i}{\partial q_j(t)}\frac{\partial q_j(t)}{\partial A_i(t)},\]
and
the terms in (\ref{a12})
\[-\sum_{l\neq i,j}\frac{\partial \hat{\mathcal{H}}_i}{\partial k_l(t)}\frac{\partial k_l(t)}{\partial A_j(t)}-\frac{\partial \hat{\mathcal{H}}_i}{\partial k_j(t)}\frac{\partial k_j(t)}{\partial A_j(t)}-\sum_{l\neq i,j}\frac{\partial \hat{\mathcal{H}}_i}{\partial q_l(t)}\frac{\partial q_l(t)}{\partial A_j(t)}-\frac{\partial \hat{\mathcal{H}}_i}{\partial q_j(t)}\frac{\partial q_j(t)}{\partial A_j(t)}\]
take into account the interaction between the control variables of the firms other than Firm $i$ and the current levels of the state variables.

We have
\begin{equation}
\frac{\partial \hat{\mathcal{H}}_i}{\partial A_i(t)}=\frac{\partial p_i}{\partial A_i(t)}q_i(t)-\delta\lambda_{ii}(t),\label{b11}
\end{equation}
\begin{equation*}
\frac{\partial \hat{\mathcal{H}}_i}{\partial A_j(t)}=-\delta\lambda_{ij}(t),\ j\neq i,
\end{equation*}
\begin{equation*}
\frac{\partial \hat{\mathcal{H}}_i}{\partial k_j(t)}=-\lambda_{ij}(t)\frac{\partial \Gamma_j}{\partial k_j(t)},\ j\neq i,%\label{b3}
\end{equation*}
\begin{equation}
\frac{\partial \hat{\mathcal{H}}_i}{\partial q_j(t)}=\frac{\partial p_i}{\partial q_j(t)}q_i(t),\ j\neq i,\label{b31}
\end{equation}
and
\begin{equation*}
\frac{\partial k_j(t)}{\partial A_i(t)}=0,\ j\neq i.
%\frac{\partial k_j(t)}{\partial A_i(t)}=0,\ \frac{\partial k_j(t)}{\partial A_j(t)}=0.\label{b5}
\end{equation*}
$\frac{\partial q_j(t)}{\partial A_i(t)}$ is obtained by (\ref{2-ap3}) in Appendix. If the outputs of the firms are strategic substitutes $\left(\frac{\partial p_j}{\partial q_i(t)}+\frac{\partial^2 p_j}{\partial q_i(t)\partial q_j(t)}q_j(t)<0\right)$, $\frac{\partial q_j(t)}{\partial A_i(t)}>0$, and if they are strategic complements $\left(\frac{\partial p_j}{\partial q_i(t)}+\frac{\partial^2 p_j}{\partial q_i(t)\partial q_j(t)}q_j(t)>0\right)$, $\frac{\partial q_j(t)}{\partial A_i(t)}>0$.

At the steady state we have
\[\frac{dA_i(t)}{dt}=\Gamma(k_i(t), K_{-i}(t))-\delta A_i(t)=0,\]
and
\[\frac{\partial \lambda_{ii}}{\partial t}=0,\ \frac{\partial \lambda_{ij}}{\partial t}=0,\ i\in \{1, 2, \dots, n\},\ j\neq i.\]
%(\ref{b11}) are reduced to
%\begin{equation*}
%\frac{\partial \hat{\mathcal{H}}_i}{\partial A_i(t)}=\frac{\partial p_i}{\partial A_i(t)}-\lambda_{ii}(t)\delta.
%\end{equation*}
By symmetry of the oligopoly we can assume $\lambda_{ii}(t)=\lambda_{jj}(t)$ for $j\neq i$, $\lambda_{ij}(t)=\lambda_{il}(t)=\lambda_{ji}(t)$ for $j, l\neq i$, $\frac{\partial k_j(t)}{\partial A_i(t)}=\frac{\partial k_l(t)}{\partial A_j(t)}$ for $l\neq j$, $A_i(t)=A_j(t)$, $k_j(t)=k_i(t)$, $q_j(t)=q_i(t)$ for $j\neq i$, and so on. Denote the steady state values of $\lambda_{ii}$, $\lambda_{ij}$, $A_i(t)$, $q_i(t)$ and $k_i(t)$ by $\lambda_{own}$, $\lambda_{other}$, $A^{**}$, $q^{**}$ and $k^{**}$. Then, with (\ref{b11}) and (\ref{b31}), (\ref{a11}) is rewritten as
\begin{equation}
\frac{\partial p_i}{\partial A_i(t)}q^*+(n-1)\frac{\partial p_i}{\partial q_j(t)}q^*\frac{\partial q_j(t)}{\partial A_i(t)}=(\rho+\delta) \lambda_{own}.\label{a110}
\end{equation}
The first order condition for the choice of $k_i(t)$, (\ref{ck1}), is reduced to
\[-\gamma'(k^{**})+\lambda_{own}\frac{\partial \Gamma_i}{\partial k_i(t)}=0.\]
\textcolor{black}{From (\ref{a110})} this means
\begin{align}
\left[\frac{\partial p_i}{\partial A_i(t)}+(n-1)\frac{\partial p_i}{\partial q_j(t)}\frac{\partial q_j(t)}{\partial A_i(t)}\right]q^*-(\rho+\delta)\frac{\gamma'(k^{**})}{\frac{\partial \Gamma_i}{\partial k_i(t)}}=0.\label{ga2}\end{align}

\textcolor{black}{From (\ref{m1}) and (\ref{a11} with (\ref{b11}), (\ref{b31})) we obtain
\begin{equation}
\frac{dA_i(t)}{dt}=\Gamma(k_i(t),(n-1)k_i(t))-\delta A_i(t),\label{d11}
\end{equation}
and
\begin{equation}
\frac{\partial \lambda_{ii}(t)}{\partial t}=-\frac{\partial p_i}{\partial A_i(t)}q_i(t)-(n-1)\frac{\partial p_i}{\partial q_j(t)}\frac{\partial q_j(t)}{\partial A_i(t)}q_i(t)+(\rho+\delta) \lambda_{ii}(t).\label{d21}
\end{equation}
Under the assumption $\frac{\partial \Gamma_i}{\partial K_{-i}(t)}=0$,} for the steady state to be a saddle point we need
\[\det{\Omega'}<0,\]
where
\[
\Omega'=
\begin{bmatrix}
-\delta&\textcolor{black}{\frac{\partial \Gamma_i}{\partial k_i(t)}\frac{\left(\frac{\partial \Gamma_i}{\partial k_i(t)}\right)^2}{\gamma''\frac{\partial \Gamma_i}{\partial k_i(t)}-\gamma'\frac{\partial^2 \Gamma_i}{\partial k_i(t)^2}}}\\
-\frac{\partial }{\partial A_i(t)}\left[\frac{\partial p_i}{\partial A_i(t)}q_i(t)+(n-1)\frac{\partial p_i}{\partial q_j(t)}\frac{\partial q_j(t)}{\partial A_i(t)}q_i(t)\right]&\rho+\delta
\end{bmatrix}.
\]
We assume $\det{\Omega'}<0$. This system of dynamics are obtained from (\ref{d11}) and (\ref{d21}).

The first order condition for the output choice in the memoryless closed-loop case, (\ref{cq1}), is the same as that, (\ref{q}), in the open-loop case. Thus, we have $q^{**}=q^*$ \emph{given} the value of $A_i(t)$. 

%Compare (\ref{ga2}) and (\ref{ga1}). Since $\frac{\partial p_i}{\partial A_i(t)}>0$, $\frac{\partial p_i}{\partial q_j(t)}<0$, $\frac{\partial \Gamma_i}{\partial k_i(t)}>0$, $\frac{\partial^2 \Gamma_i}{\partial k_i(t)^2}\leq 0$, $\gamma''>0$,  if the outputs of the firms are strategic substitutes ($\frac{\partial q_j(t)}{\partial A_i(t)}<0$), we have $k^{**}>k^*$; and if the outputs of the firms are strategic complements ($\frac{\partial q_j(t)}{\partial A_i(t)}>0$), we have $k^{**}<k^*$.

%The second adjoint condition in the closed-loop case, (\ref{a21}), may be different from that, (\ref{a2}), in the open-loop case, and $\lambda_{other}$ in the closed-loop case is not necessarily equal to zero. However, the above results mean the following conclusion.

We show the following proposition.
\begin{proposition}
Assume that there is no spillover effect of advertising investment.

\begin{enumerate}
	\item If the outputs of the firms are strategic substitutes, the steady state value of $A_i(t)$ in the closed-loop solution is larger than that in the open-loop solution.
	\item If the outputs of the firms are strategic complements, the steady state value of $A_i(t)$ in the closed-loop solution is smaller than that in the open-loop solution.
\end{enumerate}
\end{proposition}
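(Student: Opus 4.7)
The plan is to reduce the comparison of $A^{*}$ and $A^{**}$ to a single-variable comparative statics exercise, exactly parallel to the argument that already established Lemma~1.

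First, I would exploit the absence of spillover ($\partial\Gamma_i/\partial K_{-i}=0$) together with symmetry to recast both steady states as roots of scalar equations in $A$. Since the steady-state equation $\Gamma_i(k)=\delta A$ is the same in the open-loop and closed-loop cases, $k$ is an increasing function of $A$; and since (\ref{cq1}) coincides with (\ref{q}), the map from $A$ to the instantaneous Cournot output is also the same, so I can use common reduced forms $q^{*}(A)$ and $k^{*}(A)$ in both (\ref{ga1}) and (\ref{ga2}). Write $\Phi_{OL}(A)$ for the left-hand side of (\ref{ga1}) and $\Phi_{CL}(A)$ for the left-hand side of (\ref{ga2}), each evaluated along these common reduced forms. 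By Lemma~1 for $\Phi_{OL}$, and by the analogous saddle-point argument for $\Phi_{CL}$ using $\det\Omega'<0$ together with assumptions (\ref{as0}) and (\ref{as2}), both $\Phi_{OL}$ and $\Phi_{CL}$ are strictly decreasing in $A$.

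Second, the explicit forms of the two residuals yield
\[
\Phi_{CL}(A)-\Phi_{OL}(A)=(n-1)\,q^{*}(A)\,\frac{\partial p_i}{\partial q_j(t)}\,\frac{\partial q_j(t)}{\partial A_i(t)},
\]
and this single term carries the whole comparison. By assumption $\partial p_i/\partial q_j(t)<0$, and the Appendix formula (\ref{2-ap3}) provides the sign of $\partial q_j(t)/\partial A_i(t)$: under strategic substitutes it is negative (the standard Cournot comparative statics, whereby an outward shift of Firm~$i$'s best response is answered by a cutback from Firm~$j$), and under strategic complements it is positive. Consequently $\Phi_{CL}(A)>\Phi_{OL}(A)$ for every admissible $A$ in the substitutes case, and $\Phi_{CL}(A)<\Phi_{OL}(A)$ in the complements case.

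Third, I would finish by a sign chase. Both $\Phi_{OL}$ and $\Phi_{CL}$ are strictly decreasing with $\Phi_{OL}(A^{*})=0$ and $\Phi_{CL}(A^{**})=0$: in the substitutes case, $\Phi_{CL}(A^{*})>\Phi_{OL}(A^{*})=0=\Phi_{CL}(A^{**})$ forces $A^{**}>A^{*}$; in the complements case, $\Phi_{CL}(A^{*})<0=\Phi_{CL}(A^{**})$ forces $A^{**}<A^{*}$, which is precisely the claim. The main obstacle I expect is pinning down the sign of $\partial q_j(t)/\partial A_i(t)$ from the $n$-firm static comparative-statics system embedded in the closed-loop analysis; this is exactly what the Appendix formula (\ref{2-ap3}) delivers, and its sign pattern is the linchpin of the whole proposition. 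A secondary subtlety is justifying $q^{**}(A)=q^{*}(A)$ pointwise in $A$ (not just at the respective steady states), but this is immediate because the instantaneous output FOC at time $t$ treats $A_i(t)$ parametrically in both solution concepts, so once the sign of the extra term is known the monotonicity supplied by Lemma~1 does all the remaining work.
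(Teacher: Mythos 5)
Your proposal is correct and follows essentially the same route as the paper: both reduce the comparison of (\ref{ga2}) and (\ref{ga1}) to the single wedge term $(n-1)q\,\frac{\partial p_i}{\partial q_j(t)}\frac{\partial q_j(t)}{\partial A_i(t)}$, sign it via (\ref{2-ap3}), and conclude with a monotonicity-in-$A_i(t)$ argument. The only (harmless) difference is that the paper evaluates the open-loop residual $\Phi$ at the closed-loop steady state and invokes Lemma 1, whereas you evaluate the closed-loop residual at the open-loop steady state and therefore need it to be decreasing as well, which indeed follows from the assumed $\det{\Omega'}<0$ by the same computation that proves Lemma 1.
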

\begin{proof}
\begin{enumerate}
\item Let us compare (\ref{ga2}) with (\ref{ga1}) in the open-loop case.  Suppose that $A^{*}=A^{**}$. Then, $q^{*}=q^{**}$, $k^{*}=k^{**}$. They satisfy (\ref{ga2}), and the left-hand side of (\ref{ga1}), which is denoted by $\Phi$, is negative because $\frac{\partial p_i}{\partial q_j(t)}<0$ and $\frac{\partial q_j(t)}{\partial A_i(t)}<0$. Since $\Phi$ is decreasing with respect to $A_i(t)$, the steady state value of $A_i(t)$ in the open-loop solution is smaller than that in the closed-loop solution, or the steady state value of $A_i(t)$ in the closed-loop solution is larger than that in the open-loop solution.
\item If the outputs of the firms are strategic complements, $\frac{\partial q_j(t)}{\partial A_i(t)}<0$, and the left-hand side of (\ref{ga1}) is positive. Therefore, the steady state value of $A_i(t)$ in the closed-loop solution is smaller than that in the open-loop solution.
\end{enumerate}\qed
\end{proof}

\section{Advertising in a dynamic oligopoly: Feedback solution  without spillover}

We consider a solution of feedback approach using the HJB equation. Similarly to the previous section we assume 
\[\frac{\partial \Gamma_i}{\partial K_{-i}(t)}=0.\]
Let $V_i(A_1(t),A_2(t),\dots,A_n(t))$ be the value function of Firm $i,\ i\in \{1, 2, \dots, n\}$. The HJB equation for Firm $i$ is written as
\begin{align}
&\rho V_i(A_1(t),A_2(t),\dots,A_n(t))=\max_{q_i(t),k_i(t)}\{p_iq_i(t)-c(q_i(t))-\gamma_i(k_i(t))\label{hjb}\\
&+\frac{\partial V_i(A_1(t),A_2(t),\dots,A_n(t)}{\partial A_i(t)}(\Gamma_i-\delta A_i(t))+\sum_{j\neq i}\frac{\partial V_i(A_1(t),A_2(t),\dots,A_n(t)}{\partial A_j(t)}(\Gamma_j-\delta A_i(t))\}.\notag
\end{align}
The first order conditions are
\begin{equation}
p_i+\frac{\partial p_i}{\partial q_i(t)}q_i(t)-c'(q_i(t))=0,\label{f1}
\end{equation}
and
\begin{equation}
-\gamma'(k_i(t))+\frac{\partial V_i(A_1(t),A_2(t),\dots,A_n(t))}{\partial A_i(t)}\frac{\partial \Gamma_i}{\partial k_i(t)}=0.\label{f12}
\end{equation}
(\ref{f1}) is the same as (\ref{cq1}) in the memoryless closed-loop case. From (\ref{f12})
\begin{equation}
\frac{\partial V_i(A_1(t),A_2(t),\dots,A_n(t))}{\partial A_i(t)}=\frac{\gamma'(k_i(t))}{\frac{\partial \Gamma_i}{\partial k_i(t)}}.\label{f2}
\end{equation}
Substituting this into (\ref{hjb}), using symmetry, yields
\begin{align*}
\rho &V_i(A_1(t),A_2(t),\dots,A_n(t))=p_iq_i(t)-c(q_i(t))-\gamma_i(k_i(t)) \\
&+\frac{\gamma'(k_i(t))}{\frac{\partial \Gamma_i}{\partial k_i(t)}}[\Gamma_i-\delta A_i(t)]+(n-1)\frac{\partial V_i(A_1(t),A_2(t),\dots,A_n(t)}{\partial A_j(t)}[\Gamma_j-\delta A_j(t)],\ j\neq i.\notag
\end{align*}
This is an identity. Differentiating it with respect to $A_i(t)$ yields
\begin{align*}
\rho &\frac{\partial V_i(A_1(t),A_2(t),\dots,A_n(t))}{\partial A_i(t)}=\frac{\partial p_i}{\partial A_i(t)}q_i(t)+\left[p_i+\frac{\partial p_i}{\partial q_i(t)}q_i(t)-c'(q_i(t))\right]\frac{\partial q_i(t)}{\partial A_i(t)}\\
&+(n-1)\frac{\partial p_i}{\partial q_j(t)}q_i(t)\frac{\partial q_j(t)}{\partial A_i(t)}-\frac{\partial}{\partial k_i(t)}\left(\frac{\gamma'(k_i(t))}{\frac{\partial \Gamma_i}{\partial k_i(t)}}\right)\frac{\partial k_i(t)}{\partial A_i(t)}(\Gamma_i-\delta A_i(t))-\delta\frac{\gamma'(k_i(t))}{\frac{\partial \Gamma_i}{\partial k_i(t)}}\\
&+(n-1)\frac{\partial^2 V_i(A_1(t),A_2(t),\dots,A_n(t)}{\partial A_i(t)\partial A_j(t)}(\Gamma_j-\delta A_j(t))).
\end{align*}
At the steady state $\Gamma_i-\delta A_i(t)=\Gamma_j-\delta A_j(t)=0$. Thus, using (\ref{f1}), we get
\begin{align*}
\rho &\frac{\partial V_i(A_1(t),A_2(t),\dots,A_n(t))}{\partial A_i(t)}=\frac{\partial p_i}{\partial A_i(t)}q_i(t)+(n-1)\frac{\partial p_i}{\partial q_j(t)}q_i(t)\frac{\partial q_j(t)}{\partial A_i(t)}-\delta\frac{\gamma'(k_i(t))}{\frac{\partial \Gamma_i}{\partial k_i(t)}}.
\end{align*}
From this and (\ref{f2}),
\begin{align*}
\left[\frac{\partial p_i}{\partial A_i(t)}+(n-1)\frac{\partial p_i}{\partial q_j(t)}\frac{\partial q_j(t)}{\partial A_i(t)}\right]q_i(t)-(\rho+\delta)\frac{\gamma'(k_i(t))}{\frac{\partial \Gamma_i}{\partial k_i(t)}}=0.
\end{align*}
This is the same as (\ref{ga2}). Therefore, 
\begin{proposition}
If there is no  spillover effect of the advertising investment, the memoryless closed-loop solution and the feedback solution are equivalent.
\end{proposition}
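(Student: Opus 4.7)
The plan is to set up the Hamilton--Jacobi--Bellman equation for the feedback problem and then show that, at the steady state, the first-order conditions reduce to exactly equation (\ref{ga2}) from the memoryless closed-loop case. Since the output first-order condition (\ref{f1}) is literally the same as (\ref{cq1}), the equivalence reduces to matching the condition that determines the steady-state advertising level.

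First, I would write the HJB equation for Firm $i$ with value function $V_i(A_1,\dots,A_n)$, take the pointwise maximum on the right-hand side, and record the two first-order conditions: one for $q_i$ (an envelope-type condition identical to (\ref{cq1})) and one for $k_i$, which yields the key identification
\[
\frac{\partial V_i}{\partial A_i(t)}=\frac{\gamma'(k_i(t))}{\partial \Gamma_i/\partial k_i(t)}.
\]
This plays the role that $\lambda_{ii}$ played in the memoryless closed-loop analysis; obtaining it cleanly uses the no-spillover assumption $\partial \Gamma_i/\partial K_{-i}=0$, which kills the cross terms $\partial V_i/\partial A_j$ from the derivative of $k_i$.

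Second, I would substitute this expression back into the HJB equation, impose symmetry, and then differentiate the resulting identity with respect to $A_i(t)$. The differentiation produces five kinds of terms: (i) the direct effect $(\partial p_i/\partial A_i)q_i$; (ii) a term proportional to the output FOC bracket $[p_i+(\partial p_i/\partial q_i)q_i - c'(q_i)]\,\partial q_i/\partial A_i$, which vanishes by (\ref{f1}); (iii) the strategic-interaction term $(n-1)(\partial p_i/\partial q_j)q_i\,\partial q_j/\partial A_i$; (iv) a term proportional to $(\Gamma_i-\delta A_i)$ arising from differentiating $\gamma'/\Gamma_k'$ through $k_i$; and (v) a cross-partial term $(n-1)(\partial^2 V_i/\partial A_i\partial A_j)(\Gamma_j-\delta A_j)$ from the adjacent state transitions. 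At the steady state the accumulation equations give $\Gamma_i-\delta A_i=\Gamma_j-\delta A_j=0$, so terms (iv) and (v) drop out without needing any information about the (generally unknown) cross-partials of $V_i$.

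Combining what remains with the identification $\partial V_i/\partial A_i = \gamma'(k_i)/\Gamma_k'$ produces precisely
\[
\left[\frac{\partial p_i}{\partial A_i(t)}+(n-1)\frac{\partial p_i}{\partial q_j(t)}\frac{\partial q_j(t)}{\partial A_i(t)}\right]q_i(t)-(\rho+\delta)\frac{\gamma'(k_i(t))}{\partial \Gamma_i/\partial k_i(t)}=0,
\]
which is (\ref{ga2}). Since (\ref{f1}) coincides with (\ref{cq1}) and the advertising accumulation equation is common to both formulations, equal steady-state equations imply equal steady states, and the two solutions are equivalent. The main obstacle I anticipate is conceptual rather than computational: justifying why the cross-partial $\partial^2 V_i/\partial A_i\partial A_j$ appears harmlessly, i.e., verifying that its coefficient $(\Gamma_j-\delta A_j)$ vanishes at the steady state before one ever has to compute it. This is exactly where the symmetric steady-state condition does the heavy lifting, and it is the step that makes the HJB analysis tractable without solving for the full functional form of $V_i$.
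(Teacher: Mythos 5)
Your proposal is correct and follows essentially the same route as the paper: derive the HJB first-order conditions, identify $\partial V_i/\partial A_i$ with $\gamma'(k_i)/\bigl(\partial \Gamma_i/\partial k_i\bigr)$, differentiate the HJB identity with respect to $A_i(t)$, and use the steady-state conditions $\Gamma_i-\delta A_i=\Gamma_j-\delta A_j=0$ together with the output FOC to eliminate all extra terms, recovering (\ref{ga2}). Your observation that the cross-partial $\partial^2 V_i/\partial A_i\partial A_j$ drops out because its coefficient vanishes at the steady state is exactly the step the paper relies on.
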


\section{Cartel}

We consider full cartelization in which the firms cooperatively determine their outputs and advertising investments to maximize the discounted total profits. By symmetry we assume $p_i(t)=p_j(t)$, $q_i(t)=q_j(t))$, $k_i(t)=k_j(t)$, $A_i(t)=A_j(t)$ for any $j\neq i$. Denote the values of them by $p(t)$, $q(t)$, $k(t)$, $A(t)$. The objective of the firms is
\[\max_{q(t),k(t)}n\int_0^{\infty}n[p(A(t),q(t),\dots,q(t))q(t)-c(q(t))-\gamma(k(t))]dt.\]
subject to
\[\frac{dA(t)}{dt}=\Gamma(k(t),(n-1)k(t))-\delta A(t).\]
Let $\hat{\mathcal{H}}$ be the current value Hamiltonian function, then
\begin{align}
\hat{\mathcal{H}}=&n\left\{p(A(t),q(t),\dots,q(t))q(t)-c(q(t))-\gamma(k(t))\right\}\label{ha}\\
&+\lambda(t)\left[\Gamma(k(t),(n-1)k(t))-\delta A(t)\right].\notag
\end{align}
Let
\[\mu(t)=e^{-\rho t}\lambda(t).\]
$\mu(t)$ is the costate variable. Denote $p(A(t),q(t),\dots,q(t))$ by $p$. The first order conditions are
\begin{equation}
\frac{\partial \hat{\mathcal{H}}}{\partial q(t)}=n\left\{p+\left[\frac{\partial p_i}{\partial q_i(t)}+(n-1)\frac{\partial p_i}{\partial q_j(t)}\right]q(t)-c'(q(t))\right\}=0,\label{car1}
\end{equation}
and
\begin{equation}
\frac{\partial \hat{\mathcal{H}}}{\partial k(t)}=-n\gamma'+\lambda(t)\left[\frac{\partial \Gamma_i}{\partial k_i(t)}+(n-1)\frac{\partial \Gamma_i}{\partial K_{-i}(t)}\right]=0.\label{car2}
\end{equation}
The adjoint condition is
\begin{equation}
-\frac{\partial \hat{\mathcal{H}}}{\partial A(t)}=-n\frac{\partial p_i}{\partial A(t)}q_i(t)+\delta \lambda(t)=\frac{\partial \lambda(t)}{\partial t}-\rho \lambda(t).\label{car21}
\end{equation}
At the steady state
\[\frac{dA(t)}{dt}=\Gamma(k(t),(n-1)k(t))-\delta A(t)=0,\]
\[\frac{\partial \lambda(t)}{\partial t}=0.\]
Denote the steady state values of $A(t)$, $\lambda(t)$, $q(t)$ and $k(t)$ by $A^c$, $\lambda^c$, $q^c$ and $k^c$. From (\ref{car21})
\[n\frac{\partial p_i}{\partial A(t)}q^c-\lambda^c(\rho+\delta)=0.\]
Then, from (\ref{car2}) we have
\begin{equation}
\frac{\partial p_i}{\partial A(t)}q^c\left[\frac{\partial \Gamma_i}{\partial k_i(t)}+(n-1)\frac{\partial \Gamma_i}{\partial K_{-i}(t)}\right]-\left(\rho+\delta\right)\gamma'=0.\label{car22}
\end{equation}

We show the following proposition.
\begin{proposition}
If there is no spillover effect, the value of $A(t)$ in the cartel is smaller than the value of $A_i(t)$ in the open-loop solution.  
\end{proposition}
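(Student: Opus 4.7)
The plan is to exploit the fact that, once we impose no spillover, the cartel's steady-state optimality condition (\ref{car22}) takes exactly the same analytic form as the open-loop condition (\ref{ga1}), with the only difference being that the output entering it is the cartel output $q^c$ rather than the oligopoly output $q^*$. I will combine this observation with Lemma~1 (the monotonicity of $\Phi$) to locate $A^c$ relative to $A^*$.

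First, I would rewrite (\ref{car22}) under $\partial \Gamma_i/\partial K_{-i}=0$ as
\[
\frac{\partial p_i}{\partial A(t)}q^c-(\rho+\delta)\frac{\gamma'(k^c)}{\partial \Gamma_i/\partial k_i(t)}=0,
\]
which is the exact analogue of (\ref{ga1}) but evaluated with the cartel output. Next, I would observe that the symmetric steady-state accumulation equation $\Gamma(k,(n-1)k)=\delta A$ is common to both regimes, so the implied mapping $A\mapsto k$ (and hence the advertising-cost term $(\rho+\delta)\gamma'(k)/(\partial \Gamma_i/\partial k_i)$) is identical in the cartel and in the open-loop solution. Only the output-dependent term $(\partial p_i/\partial A)\,q$ differs across regimes at a given $A$.

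The key comparison is therefore between the cartel output $q^c$ and the open-loop output $q^*$ at a common level of $A$. Subtracting the cartel output FOC (\ref{car1}) from the open-loop FOC (\ref{q}) leaves the wedge $-(n-1)(\partial p_i/\partial q_j(t))\,q$, which is strictly positive because $\partial p_i/\partial q_j<0$. Hence the open-loop FOC, evaluated at $q=q^c$ and $A=A^c$, is strictly positive; combined with the negativity of $\partial^2\hat{\mathcal H}_i/\partial q_i(t)^2$, this forces the open-loop optimizer at $A^c$ to satisfy $\tilde q(A^c)>q^c$. Plugging this into the open-loop $\Phi$ from (\ref{ga1}) yields
\[
\Phi(A^c)=\frac{\partial p_i}{\partial A}\tilde q(A^c)-(\rho+\delta)\frac{\gamma'(k^c)}{\partial \Gamma_i/\partial k_i}>\frac{\partial p_i}{\partial A}q^c-(\rho+\delta)\frac{\gamma'(k^c)}{\partial \Gamma_i/\partial k_i}=0,
\]
using $\partial p_i/\partial A>0$ together with assumption (\ref{aq}) so that $(\partial p_i/\partial A)\,q$ is increasing in the symmetric output. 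Since $\Phi(A^*)=0$ and $\Phi$ is decreasing in $A_i(t)$ by Lemma~1, the inequality $\Phi(A^c)>\Phi(A^*)$ gives $A^c<A^*$, which is the claim.

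The main obstacle, I expect, is the clean comparison $\tilde q(A^c)>q^c$ at a common $A$: one must make sure that the strict inequality follows purely from the optimality of each FOC together with the static second-order condition, independently of whether the outputs are strategic substitutes or complements, and that the induced sign of $\Phi(A^c)$ is not overturned by the dependence of $\partial p_i/\partial A$ on $q$. Once this step is in place, the monotonicity established in Lemma~1 delivers the conclusion without any further dynamic analysis.
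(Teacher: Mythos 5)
Your proof is correct and follows essentially the same route as the paper: compare the cartel output condition (\ref{car1}) with the open-loop condition (\ref{q}) to get $q^c<q^*$ at a common level of $A$, observe that without spillover (\ref{car23}) is just (\ref{ga1}) evaluated at the cartel output, sign $\Phi$ at $A^c$ using assumption (\ref{aq}), and conclude via Lemma 1. In fact your sign bookkeeping --- $(\partial p_i/\partial A_i(t))q_i(t)$ \emph{increasing} in output by (\ref{aq}), hence $\Phi(A^c)>0=\Phi(A^*)$ and so $A^c<A^*$ --- is the internally consistent version of the paper's own step, which asserts that the product is ``decreasing'' and that $\Phi$ is negative (a slip that, combined with Lemma 1, would actually deliver the opposite ranking) while drawing the same conclusion you reach.
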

\begin{proof}
\begin{enumerate}
\item Compare (\ref{car1}) and the first order condition in the open-loop case (\ref{q}). If $q_i(t)$ satisfies (\ref{car1}), the left-hand side of (\ref{q}) is
\[-(n-1)\frac{\partial p_i}{\partial q_j(t)}q_i(t)>0.\]
Therefore, under the second order condition for the output choice in the open-loop case, the output of each firm in the open-loop solution is larger than that in the cartel, or the output of each firm in the cartel is smaller than that in the open-loop solution, that is, $q^c<q^*$ \emph{given $A(t)$} .

\item Assume $\frac{\partial \Gamma_i}{\partial K_{-i}(t)}=0$. Then, (\ref{car22}) is reduced to
\begin{equation}
\frac{\partial p_i}{\partial A(t)}q^c-\left(\rho+\delta\right)\frac{\gamma'}{\frac{\partial \Gamma_i}{\partial k_i(t)}}=0.\label{car23}
\end{equation}
Suppose that $A(t)$ and $q^c$, which is obtained from (\ref{car1}) with $A(t)$, satisfy (\ref{car23}). Then, the left-hand side of (\ref{ga1}) in the open-loop case, which is denoted by $\Phi$, is negative because by (\ref{aq}) $\frac{\partial p_i}{\partial A(t)}q_i(t)$ is deceasing with respect to $q_i(t)$ and $q^*>q^c$. Since $\Phi$ is decreasing with respect to $A_i(t)$, the value of $A_i(t)$ in the open-loop solution is larger than the value of $A(t)$  in the cartel, or the value of $A(t)$ in the cartel is smaller than the value of $A_i(t)$ in the open-loop solution. \qed
\end{enumerate}
\end{proof}
If there is spillover effect of advertising activities, the value of $A(t)$ in the cartel may be larger than $A_i(t)$ in the open-loop solution.
%\[\frac{\partial p_i}{\partial A_i(t)}q^*\frac{\partial \Gamma_i}{\partial k_i(t)}=(\rho+\delta)\gamma'(k^*).\]

%Now assume $\frac{\partial \Gamma_i}{\partial K_{-i}(t)}=0$, that is, there is no spillover effect.

\subsection*{\textbf{Linear and quadratic example}}

Assume that the inverse demand function is
\[p_i(t)=A_i(t)-Bq_i(t)-D\sum_{j\neq i}q_j(t).\]
The advertising cost is
\[\gamma(k_i(t))=\frac{\alpha}{2}(k_i(t))^2.\]
The production cost is
\[c(q_i(t))=cq_i(t),\]
and the accumulation of advertising effects is written as 
\[\Gamma(k_i(t),K_{-i}(t))=k_i(t)+\beta\sum_{j\neq i}k_j(t).\]
At the steady state (\ref{car1}) and (\ref{car2}) are reduced to
\begin{equation}
A-[B+D(n-1)]q-[B+(n-1)D]q-cq=0,\label{car3}
\end{equation}
and
\begin{equation}
-\alpha nk+\lambda[1+(n-1)\beta]=0.\label{car4}
\end{equation}
From (\ref{car21}) with $\frac{\partial \lambda(t)}{\partial t}=0$,
\begin{equation}
(\rho+\delta)\lambda=nq.\label{car5}
\end{equation}
From (\ref{car3})
\begin{equation}
q=\frac{A-c}{2[B+(n-1)D]}.\label{car6}
\end{equation}
From $\frac{dA(t)}{dt}=k+(n-1)\beta k-\delta A=0$,
\begin{equation}
k=\frac{\delta A}{1+(n-1)\beta}.\label{car7}
\end{equation}
Solving (\ref{car4}), (\ref{car5}), (\ref{car6}) and (\ref{car7}),
\[A=\frac{[1+(n-1)\beta]^2c}{(1+(n-1)\beta)^2-2\alpha\delta(\rho+\delta)(B+(n-1)D)},\]
\[k=\frac{[1+(n-1)\beta]c\delta}{(1+(n-1)\beta)^2-2\alpha\delta(\rho+\delta)(B+(n-1)D)}.\]

In \cite{cl6} the current value Hamiltonian function in (\ref{ha}) is written as follows.
\[\hat{\mathcal{H}}=n\left\{p(A(t),q(t),\dots,q(t))q(t)-c(q(t))-\gamma(k(t))+\lambda(t)\left[\Gamma(k(t),(n-1)k(t))-\delta A(t)\right]\right\}.\]
Then, we obtain
\[A=\frac{[1+(n-1)\beta]^2nc}{n[1+(n-1)\beta]^2-2\alpha\delta(\rho+n\delta)[B+(n-1)D]},\]
\[k=\frac{[1+(n-1)\beta]nc\delta}{n[1+(n-1)\beta]^2-2\alpha\delta(\rho+n\delta)[B+(n-1)D]}.\]
These are the results in \cite{cl6}.

%a=-(cn(\betan-\beta+1)^2)/(2\alphadn^2\delta^2-2\alphadn\delta^2+2\alphabn\delta^2+2\alphadnr\delta-2\alphadr\delta+2\alphabr\delta-\beta^2n^3+2\beta^2n^2-2\betan^2-\beta^2n+2\betan-n),

%k=-(cn(\betan-\beta+1)\delta)/(2\alphadn^2\delta^2-2\alphadn\delta^2+2\alphabn\delta^2+2\alphadnr\delta-2\alphadr\delta+2\alphabr\delta-\beta^2n^3+2\beta^2n^2-2\betan^2-\beta^2n+2\betan-n)

\section{Concluding Remark}

We have studied the problem of advertising activities in a dynamic oligopoly under general demand and cost functions by a differential game approach. We have shown that the results in \cite{cl6} in a case of linear demand and quadratic cost functions can be generalized to a case of general demand and cost functions, but some results depend on the property of demand functions, strategic substitutability or strategic complementartity.

\section*{Acknowledgment}

This work was supported by Japan Society for the Promotion of Science KAKENHI Grant Number 18K01594 and 18K12780.
\section*{\boldmath  Appendix: Derivation of $\frac{\partial q_j(t)}{\partial A_i(t)}$.}

Suppose a state such that $q_1(t)=q_2(t)=\dots=q_n(t)$. Denote $p_i(A_i(t),q_1(t), q_1(t), \dots, q_n(t))$ by $p_i$. The first order conditions for Firm $i$ and Firm $j,\ j\neq i$, are
\begin{equation*}
p_i+\frac{\partial p_i}{\partial q_i(t)}q_i(t)-c'(q_i(t))=0,%\label{2-ap1}
\end{equation*}
and
\begin{equation*}
p_j+\frac{\partial p_j}{\partial q_j(t)}q_j(t)-c'(q_j(t))=0,%\label{2-ap2}
\end{equation*}
Differentiating them with respect to $A_i(t)$ yields
\begin{align*}
&\left[2\frac{\partial p_i}{\partial q_i(t)}+\frac{\partial^2 p_i}{\partial q_i(t)^2}q_i(t)-c''(q_i(t))\right]\frac{\partial q_i(t)}{\partial A_i(t)}+(n-1)\left[\frac{\partial p_i}{\partial q_j(t)}+\frac{\partial^2 p_i}{\partial q_i(t)\partial q_j(t)}q_i(t)\right]\frac{\partial q_j(t)}{\partial A_i(t)}\\
=&-\left[\frac{\partial p_i}{\partial A_i(t)}+\frac{\partial^2 p_i}{\partial q_i(t)\partial A_i(t)}q_i(t)\right],
\end{align*}
and
\[\left[\frac{\partial p_j}{\partial q_i(t)}+\frac{\partial^2 p_j}{\partial q_i(t)\partial q_j(t)}q_j(t)\right]\frac{\partial q_i(t)}{\partial A_i(t)}+\left[n\frac{\partial p_j}{\partial q_j(t)}+(n-1)\frac{\partial^2 p_j}{\partial q_j(t)^2}q_j(t)-c''(q_j(t))\right]\frac{\partial q_j(t)}{\partial A_i(t)}=0.\]
Let
\[\Psi=-\left[\frac{\partial p_i}{\partial A_i(t)}+\frac{\partial^2 p_i}{\partial q_i(t)\partial A_i(t)}q_i(t)\right]<0.\]
From them we obtain
\[\frac{\partial q_i(t)}{\partial A_i(t)}=\frac{n\frac{\partial p_j}{\partial q_j(t)}+(n-1)\frac{\partial^2 p_j}{\partial q_j(t)^2}q_j(t)-c''(q_j(t))}{\Delta}\Psi>0,\]
and
\begin{equation}
\frac{\partial q_j(t)}{\partial A_i(t)}=-\frac{\frac{\partial p_j}{\partial q_i(t)}+\frac{\partial^2 p_j}{\partial q_i(t)\partial q_j(t)}q_j(t)}{\Delta}\Psi,\label{2-ap3}
\end{equation}
where
\begin{align*}
\Delta=&\left[2\frac{\partial p_i}{\partial q_i(t)}+\frac{\partial^2 p_i}{\partial q_i(t)^2}q_i(t)-c''(q_i(t))\right]\left[n\frac{\partial p_j}{\partial q_j(t)}+(n-1)\frac{\partial^2 p_j}{\partial q_j(t)^2}q_j(t)-c''(q_j(t))\right]\\
&-(n-1)\left[\frac{\partial p_i}{\partial q_j(t)}+\frac{\partial^2 p_i}{\partial q_i(t)\partial q_j(t)}q_i(t)\right]\left[\frac{\partial p_j}{\partial q_i(t)}+\frac{\partial^2 p_j}{\partial q_i(t)\partial q_j(t)}q_j(t)\right]>0.
\end{align*}
If the outputs of the firms are strategic substitutes $\left(\frac{\partial p_j}{\partial q_i(t)}+\frac{\partial^2 p_j}{\partial q_i(t)\partial q_j(t)}q_j(t)<0\right)$, $\frac{\partial q_j(t)}{\partial A_i(t)}<0$, and if they are strategic complements $\left(\frac{\partial p_j}{\partial q_i(t)}+\frac{\partial^2 p_j}{\partial q_i(t)\partial q_j(t)}q_j(t)>0\right)$, $\frac{\partial q_j(t)}{\partial A_i(t)}>0$.

\bibliographystyle{jecon} 
\bibliography{yatanaka}

\end{document}